\newcommand{\menge}[2]{\big\{{#1}~\big |~{#2}\big\}}
\newcommand{\fenv}[1]%
{\ensuremath{\,\overrightarrow{\operatorname{env}}_{#1}}}
\newcommand{\benv}[1]%
{\ensuremath{\,\overleftarrow{\operatorname{env}}_{#1}}}
\newcommand{\scal}[2]{\left\langle{#1},{#2}  \right\rangle}
\theoremstyle{thmstyleone}%
\theoremstyle{thmstyletwo}%
\theoremstyle{thmstylethree}%
\newtheorem{theorem}{Theorem}
\newtheorem{lemma}[theorem]{Lemma}
\newtheorem{corollary}[theorem]{Corollary}
\theoremstyle{definition}
\newcommand{\la}{\langle}
\newcommand{\ra}{\rangle}
\newcommand{\Gph}{{\rm\textbf{Gph}\,}}
\newcommand{\dom}{{\rm\textbf{dom}\,}}
\newcommand{\nexto}{\kern -0.54em}
\newcommand{\dZ}{{\cal Z \kern -0.7em Z}}
\newcommand{\dC}{{\rm\hbox{C \kern-0.8em\raise0.2ex\hbox{\vrule height5.4pt width0.7pt}}}}
\newcommand{\dQ}{{\rm\hbox{Q \kern-0.85em\raise0.25ex\hbox{\vrule height5.4pt width0.7pt}}}}
\newcommand{\NN}{\mathbb{N}}
\newcommand{\HH}{\mathcal{H}}
\newcommand{\JJ}{\mathcal{J}}
\newcommand{\RR}{\mathbb{R}}
\newcommand{\ZZ}{\mathbb{Z}}
\date{}
\begin{document}
\title{Nonmonotone Spectral Analysis for Variational Inclusions 
}
\author{Oday Hazaimah
\footnote{E-mail: {\tt odayh982@yahoo.com}. https://orcid.org/0009-0000-8984-2500.  St. Louis, MO, USA}}
%

\maketitle

\begin{abstract}
Gradient descent algorithms perform well in convex optimization but can get tied for finding local minima in non-convex optimization. 
A robust method that combines a spectral approach with nonmonotone line search strategy for solving variational inclusion problems is proposed. 
Spectral properties using eigenvalues information are used for accelerating the convergence. Nonmonotonic behaviour is exhibited to relax descent property and escape local minima. 
Nonmonotone spectral conditions leverage adaptive search directions and global convergence for the proposed spectral subgradient algorithm. 
\\

\noindent {\textbf{Keywords}:} Variational Inclusions, Spectral Gradient Methods, Nonmonotone Strategy.
\medskip 

\noindent \textbf{\small Mathematics Subject Classification:}{ 58E35, 62M15, 74S25, 90C26}
\end{abstract}

\section{Introduction}
Variational inclusions are general mathematical frameworks arising naturally in many applied fields, such as finance, control theory, mechanics and operations research, see, for instance, \cite{Cavazzuti, Giannessi, Kinder}.
Variational inclusions expand many mathematical models including:
Nonlinear equations (solving $F(x)=0,$ where $F$ is a single-valued operator), variational inequalities, equilibrium problems and optimization (where the operator is the gradient of the objective function). 
One of the most known key component in treating variational inclusions is the set-valued operator, in which the operator assigns to each point in the domain a set of values, not just a single value. This is common in nonsmooth optimization, where the operator might represent the subdifferential of a nonsmooth function.
Variational inclusions have been studied numerously based on monotonic and nonmonotonic feedbacks span from fixed point iterations (use fixed point theorems to establish the existence of the solution), iterative methods (projection and gradient methods \cite{Bello-Hazaimah,Loreto-Euro}), regularization methods (for nonsmooth or nonconvex scenarios such as the popular proximal point algorithm), hybrid methods (combine monotone and nonmonotone), stochastic behaviours in the selection of step sizes, and splitting methods (such as the Douglas-Rachford algorithm \cite{D-R} where the idea is to decompose the original problem into simpler subproblems that can be solved more easily \cite{Bello-Hazaimah,tseng}).  
The main problem that we are interested in, is the variational inclusion problem which is pertaining to finding a point $x\in\HH$ such that:  
\begin{equation}\label{variational-inclusion}
0\in Tx,
\end{equation}
where $T$ is a set-valued nonlinear map and $\HH$ is an infinite-dimensional Hilbert space. As smooth set-valued maps are treating convex functions and singletons, $T$ in the setting of \eqref{variational-inclusion} can be nonsmooth to tackle nonconvex and (sometimes discontinuous) systems. One of the conventional and most effective techniques to solve \eqref{variational-inclusion} is the classical gradient (steepest) descent method in which it converges whenever the underlying operator is strongly monotone. The gradient descent method can be formulated as follows:
\begin{equation}\label{GD}
x_{k+1}=x_k-\lambda_kT(x_k),
\end{equation}
where $\lambda_k$ is called the \textit{step size} or, as in machine learning models, the \textit{learning rate}.  
Gradient descent algorithms can get lost in finding local minima when solving non-convex optimization problems. Spectral gradient methods, proposed originally by Barzilai and Borwein \cite{BzBw} and later developed by Raydan for quadratic order rate of convergence, are used effectively for adaptive step-size lengths to escape local minima for non-convex situations. 
It is known that spectral gradient methods converge faster than the classical gradient descent method.
A nonmonotone strategy incorporates spectral methods to relaxing the descent condition through a nonmonotone globalization line search since it allows for temporary increases in the objective value and maintain an overall decreasing behaviour. The spectral step size \cite{BzBw} is defined by the following formula:
\begin{equation}\label{spectral-step}
\lambda_k=\frac{\|x_k-x_{k-1}\|^2}{\la T(x_k)-T(x_{k-1}),x_k-x_{k-1}\ra}\ .
\end{equation}
Ideally, the spectral step $\lambda_k$ is chosen to be near 1, and satisfy $0<\lambda_{\min}<\lambda_{\max}<\infty.$
One advantage of using the spectral step rather than the regular line search is the reduced computational effort. Moreover, the inverse of \eqref{spectral-step} is a Rayleigh quotient corresponding to the average Jacobian matrix. 
The main inclusion problem \eqref{variational-inclusion} generalizes many important mathematical formulations as special cases which is listed as follows.
\begin{description}
\item[(i) Nonlinear systems:] 
If the operator $T$ is a single-valued operator $F$ then problem \eqref{variational-inclusion} can be reduced to finding $x\in\HH$ such that 
$$F(x)=0,$$
where $F:\HH\to\HH$ is a continuously differentiable map and its Jacobian is not accessible. Spectral method with nonmonotone line search have been applied to this type of nonlinear equations by La Cruz and Raydan in \cite{LaCruz} using merit functions to balance optimality and feasibility conditions. 
\item[(ii) Optimization:]
If the operator $T$ is a gradient of a smooth function $f, \ i.e.,\ T=\nabla f$, then the variational inclusion \eqref{variational-inclusion} is basically the unconstrained optimization problem 
$$\min_{x\in\HH}f(x),$$ 
and it is equivalent to solve the nonlinear system $\nabla f(x)=0$ globally. Moreover, if $T$ is a subgradient of a nonsmooth function $g$, then problem \eqref{variational-inclusion} is precisely the monotone inclusion $0\in\partial g(x),$ where $\partial g$ is the subdifferential mapping (defined in the next section).
\item[(iii) Variational inequalities:]
In such variational problems where one seeks to find a vector $x\in\HH$, satisfying 
$$\la Tx,y-x\ra\geq0.$$
This problem is called the variational inequality. It was first studied by Stampacchia \cite{Stampacchia}, and later developed with applications in \cite{Kinder} and in a more general settings in \cite{newtrends-gvi}. When the constraint set in the variational inequality is dynamic and not fixed then wide formulation will be extended, that is, quasi-variational inequalities which have many applications in game theory where payoff or cost functions experience nonmonotonic reactions \cite{Bensoussan-etal,Cavazzuti}. Variational inequalities have long history tied with applying dynamical systems approaches to provide discrete-time algorithms for finding stationary points and minimizers for the objective function, see \cite{Dupuis}. 
\end{description}

In this work we make use of the spectral step size introduced by Barzilai and Borwein in \cite{BzBw}, and developed later for large-scale nonlinear systems of equations by La Cruz and Raydan \cite{LaCruz}. Studying the spectral information along with gradient directions has got much attention and incorporate valuable and efficient techniques for smooth/nonsmooh optimization problems (see, for instance, \cite{Loreto-Euro, Loreto-CTI, Loreto-numerical} and the references therein). Extending the work of La Cruz and Raydan \cite{LaCruz} to variational inclusions where set-valued mappings are essential components for both smooth and nonsmooth scenarios, is worth exploring to provide novel insights and wide frameworks to the theory of variational inclusions. To the best of our knowledge, this is the first to applying the spectral subgradient algorithm combining the nonmonotone criterion and eigenvalues information for nonsmooth set-valued mappings in variational inclusions. Our primary motivation is to expand applying the spectral step techniques to set-valued maps which cover smooth and nonsmooth applications

\section{Background}
Let $\HH$ be a real Hilbert space equipped with inner product $\la \cdot , \cdot \ra$ and induced norm $\|\cdot\|:=\sqrt{\la\cdot,\cdot\ra}$. Let $T:\HH\rightrightarrows\HH$ be a set-valued (multifunction) operator and its domain be denoted by $\dom(T) :=\{ x\in\HH; T(x) <\infty\}$. 
Define the graph of $T$ by $\Gph(T):=\menge{(x,u)\in\dom(T)\times\HH}{u\in Tx}$. 
We say that $T$ is \textit{monotone mapping} if $\scal{x-y}{u-v}\geq 0,$ \ for all $(x,u), (y,v)\in\Gph(T).$
Furthermore, $T$ is maximally monotone if there exists no monotone operator $T^{\prime}$ such that $\Gph(T^{\prime})$ properly contains $\Gph(T)$. 
The inverse of $T$ is the set-valued operator defined by $T^{-1}\colon u \mapsto \menge{x\in\HH}{ u\in T(x)}$. The set of all subgradient vectors at the point $x$ form the subdifferential mapping 
$\partial g(x):=\{u\in\HH\ ;\ g(y)\geq g(x)+\langle u,y-x\rangle, \;\forall y\in\HH\}$ which is an example of a maximal monotone operator. The generalized Jacobian of $T$ at $x$ denoted by $\JJ_T(x)$.

\subsection{Spectral methods}
Spectral methods usually utilize spectral information (such as eigenvalues) 
of the underlying operator to improve the solution convergence and steer the direction of the step process for solving large-scale linear and nonlinear problems. The significance characteristic of spectral methods is that applying the spectral step does not depend on the optimal function value.
In many cases, the step size $\lambda_k$ is determined based on the spectral ratio \eqref{spectral-step}.
Spectral step sizes help lower the computational cost in large-scale systems and incorporate inexact line searches. While gradient methods might fail in global minima, the spectral method can significantly accelerate the convergence. Spectral gradient methods known for their low cost computationally and their pairing with nonmonotone line searches.

\subsection{Nonmonotone line search}
Traditional monotone algorithms (such as gradient descent) exhibit a monotonically behaviour in the function value. 
However, with the absence of monotonicity, global convergence criteria that carry nonmonotonic strategies to escape local minima for general mappings might allow limited changes in complex nonconvex nonsmooth problems. Nonmonotone methods leveraging the spectral properties of the operators involved in nonconvex or nonsmooth optimization processes and therefore its advantages can be summarized in the following three lines: faster convergence, robustness and flexibility in designing the algorithm and the iteration process. 
Nonmonotone spectral gradient methods, as a powerful approaches, combine ideas from nonmonotone line search techniques and spectral gradient methods, to achieve reliable performance for high-dimensional complex variational inclusions. 
In optimal control theory, the dynamics might exhibit nonmonotonic trajectories between states and controls. 

One of the most well-known monotone line search strategies is the one introduced by Grippo \textit{et al}. \cite{Grippo} for twice continuously differentiable objective functions in unconstrained optimization by means of Newton's method. This nonmonotone step length, which can be viewed as a generalization of Armijo's rule, is calculated by the following maximum inequality 
\begin{equation}\label{NMLS}
f(x_{k+1})\leq\max_{0\leq j\leq M}f(x_{k-j})+\gamma\lambda_k\la d_k,\nabla f(x_k)\ra,
\end{equation}
where $M\in\ZZ_+$ and $\gamma>0.$
The nonmonotonic behaviour in the search is caused by the maximum term as it allows temporary increases in the optimal function values.
The authors in \cite{Loreto-numerical} studied numerically nonsmooth uncostrained optimization problems by applying the classical subgradient method and a nonmonotone linesearch with the spectral step such that the objective function assumed to be convex and continuously differentiable almost everywhere (i.e., differentiable everywhere except on sets with Lebesgue measure zero). Loreto \textit{et al.} in \cite{Loreto-CTI} combined the spectral choice of step length with two subdifferential-like schemes: the gradient sampling method and the simplex gradient method. The objective function was assumed to be continuously differentiable almost everywhere and it is often not differentiable at minimizers (equivalently, the objective function is locally Lipschitz continuous and differentiable on an open dense subset of $\RR^n$).

\subsection{Splitting methods}
A wide class of schemes called splitting methods revolves around the idea of decomposing the set-valued operator $T$ into two different operators $T=A+B$, where $A$ is single-valued (referred to as {\em forward step}) and $B$ is set-valued (referred to as {\em backward step}).
One of the most important classical splitting methods to find a zero of the sum $T=A+B$ is the so-called {\em forward-backward} splitting method which is given as follows:
\begin{equation}\label{F-B}
x_{k+1}:=J_{\lambda_k B}(x_k-\lambda_k Ax_k), \end{equation}
where $\lambda_k >0$ for all $k\in\NN$, and $J$ is the resolvent map of $B$. This iteration converges weakly when $A$ is $\beta$-cocoercive, i.e., 
$$\forall x,y\in \HH,\qquad \langle Ax-Ay,x-y\rangle\ge \beta\|Ax-Ay\|^2,$$
It is worth emphasizing that the cocoercivity assumption of an operator is a strictly stronger property than Lipschitz continuity. 
To relax the cocoercivity assumption, Tseng \cite{tseng} proposed a modification of the \textit{forward backward} splitting method, known as the {\em forward-backward-forward} splitting iteration.

\section{Methodology and Main Results}
The geometric motivation behind using spectral information is that it is between the minimum and the maximum eigenvalue of the average Jacobian matrix. Using the spectral step size \eqref{spectral-step} and the Mean-Value theorem of integration, we obtain 
\begin{equation}\label{Rayleigh}
\frac{T(x_k)-T(x_{k-1})}{x_k-x_{k-1}}=\int_0^1 \JJ((1-t)x_{k-1}+tx_k)dt.
\end{equation}
It is clear to see that the spectral step \eqref{spectral-step} is the inverse of a Rayleigh quotient of the average Jacobian matrix (\textit{i.e.}, the right hand side of \eqref{Rayleigh}).

\subsection{The spectral subgradient algorithm}
The process of combining together the spectral-step size, and the nonmonotone line search given in \eqref{NMLS} due to Grippo \textit{at al.} \cite{Grippo}, along with the globalization strategy \cite{LaCruz} yield to a novel algorithm for solving variational inclusions. The algorithm is called the \textit{spectral subgradient} or the \textit{nonsmooth spectral} which uses the search direction $d_k=\pm T_k$. It is defined as follows:

\begin{center}\label{SSG}
\fbox{\begin{minipage}[b]{\textwidth}

\noindent{\bf Spectral Subgradient (SSG) Algorithm.} 

\medskip

\noindent{\bf Step 0. (Initialization):}
Take
\begin{equation*}
\alpha_0\in\RR, \gamma>0, M\in\ZZ_+\quad \mbox{and} \quad x_0\in\RR^n.
\end{equation*}

\noindent {\bf Step 1. (Procedure for stopping the search):} Given $x_k$, 
$$\mbox{If}\quad\|T_k\|=0,\quad\mbox{or}\quad\frac{\la T_k,\mathcal{J}_k T_k\ra}{\|T_k\|^2}<\varepsilon\quad\mbox{stop}.$$

\noindent {\bf Step 2.} If $\alpha_k,\displaystyle\frac{1}{\alpha_k}\leq\varepsilon$,\quad then set $\alpha_k=\delta,$ where  $\delta\in[\varepsilon,1/\varepsilon]$ for a small $\varepsilon$ between 0 and 1.
\medskip 

\noindent {\bf Step 3. (Search direction):} Set $\mbox{sgn}_k:=\mbox{sgn}\la T_k,\mathcal{J}_k T_k\ra$, and $d_k=-\mbox{sgn}_kT_k.$
\medskip

\noindent {\bf Step 4. (Nonmonotone global line search):} Set $\lambda=\frac{1}{\alpha_k}.$ If 
\begin{equation}\label{nmgls}
f(x_k+\lambda d_k)\leq\max_{0\leq j\leq\min(k,M)}f(x_{k-j})+2\gamma\lambda\la d_k,\mathcal{J}_k T_k\ra
\end{equation}
then, set $\lambda_k=\lambda,\quad x_{k+1}=x_k+\lambda_kd_k$. 
\medskip 

\noindent {\bf Step 5.} Choose $\theta\in(0,1),$ set $\lambda=\theta\lambda.$ Repeat step 4.
\medskip 

\noindent {\bf Step 6. (Stopping criterion):} Set $\alpha_{k+1}=\mbox{sgn}_k\displaystyle\frac{\la d_k,T_{k+1}-T_k\ra}{\lambda_k\|d_k\|^2}.$ 

If $x_{k+1}=x_k$ then stop.
\end{minipage}}\end{center}
\vspace{1.5cm} 

\begin{lemma}
The spectral subgradient method is well defined.
\end{lemma}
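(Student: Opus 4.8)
That the method is \emph{well defined} amounts to four claims, which I would establish in this order: (i) on every iteration that is actually executed, each scalar and vector the algorithm forms exists and the arithmetic is legitimate (in particular $\lambda$ stays finite and strictly positive and the denominators in Steps~4 and~6 never vanish); (ii) the search direction $d_k$ built in Step~3 is nonzero whenever the algorithm does not stop in Step~1; (iii) the backtracking loop formed by Steps~4--5 halts after finitely many reductions of $\lambda$; and (iv) as a consequence the whole sequence $(x_k)$, together with the scalars $(\alpha_k)$, is generated unambiguously by induction from $x_0$.

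For (i) and (ii) the safeguards do the work. If control reaches Step~3, then Step~1 was not triggered, so $\|T_k\|>0$ and $\la T_k,\mathcal{J}_k T_k\ra/\|T_k\|^2\ge\varepsilon>0$; hence $\la T_k,\mathcal{J}_k T_k\ra>0$, so $\mathrm{sgn}_k=1$, $d_k=-T_k\neq0$, and
\[
 \la d_k,\mathcal{J}_k T_k\ra=-\la T_k,\mathcal{J}_k T_k\ra<0 .
\]
Moreover Step~2 forces $\alpha_k\in[\varepsilon,1/\varepsilon]$ \emph{before} the search starts, so the first trial value $\lambda=1/\alpha_k$ lies in $[\varepsilon,1/\varepsilon]$; in particular it is finite and positive, and since $\|d_k\|^2=\|T_k\|^2>0$ the quotient defining $\alpha_{k+1}$ in Step~6 is a well-posed real number.

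The substance is (iii). The trial steps examined in Steps~4--5 are $\lambda^{(i)}=\theta^{\,i}/\alpha_k$, $i=0,1,2,\dots$, which decrease to $0$. Since $f(x_k)$ is one of the terms in the maximum on the right of \eqref{nmgls}, that maximum is at least $f(x_k)$, so it suffices to produce one index $i$ with $f(x_k+\lambda^{(i)}d_k)\le f(x_k)+2\gamma\,\lambda^{(i)}\,\la d_k,\mathcal{J}_k T_k\ra$. Here I would invoke the merit-function structure underlying the algorithm (in the smooth reduction $f=\tfrac12\|T\|^2$, so $\nabla f(x_k)=\mathcal{J}_k^{\top}T_k$ and, with $d_k=-T_k$, $\la\nabla f(x_k),d_k\ra=\la d_k,\mathcal{J}_k T_k\ra$), which yields the first-order expansion $f(x_k+\lambda d_k)=f(x_k)+\lambda\la d_k,\mathcal{J}_k T_k\ra+o(\lambda)$ as $\lambda\downarrow0$. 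Because $\la d_k,\mathcal{J}_k T_k\ra<0$ and $\gamma$ is small enough that $1-2\gamma>0$, the quantity $(1-2\gamma)\la d_k,\mathcal{J}_k T_k\ra$ is strictly negative and dominates the $o(1)$ remainder for all sufficiently small $\lambda>0$; hence \eqref{nmgls} holds for every small enough $\lambda^{(i)}$, the loop exits after finitely many steps, and it returns $\lambda_k>0$ with $x_{k+1}=x_k+\lambda_k d_k$. Claim~(iv) then follows by induction: a well-defined $x_k$ that triggers no stop yields, through Steps~1--5, a well-defined $\lambda_k>0$ and $x_{k+1}$, and Step~6 returns a finite $\alpha_{k+1}$, any degeneracy of which (too small, too large) is absorbed by the safeguard in Step~2 at the next iteration.

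I expect the real obstacle to be the first-order estimate used in (iii). In the genuinely nonsmooth, set-valued regime the merit function $f$ need not be differentiable at the iterate $x_k$, so the clean expansion above must be replaced either by an argument exploiting differentiability on an open dense set of full Lebesgue measure (as in \cite{Loreto-numerical,Loreto-CTI}) or by a one-sided directional/generalized-Jacobian bound of the form $\limsup_{\lambda\downarrow0}\big(f(x_k+\lambda d_k)-f(x_k)\big)/\lambda\le\la d_k,\mathcal{J}_k T_k\ra$. Making this rigorous, and specifying precisely the admissible range of $\gamma$ for which the factor $2\gamma$ in \eqref{nmgls} leaves room to absorb the remainder, is where the genuine work lies; the remaining verifications are routine bookkeeping with the Step~2 safeguard.
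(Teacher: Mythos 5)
Your proposal follows essentially the same route as the paper's own proof: if Step~1 does not trigger, then $d_k$ is a descent direction, the sufficient-decrease inequality \eqref{nmgls} holds for all sufficiently small trial steps, and hence the Step~4--5 backtracking terminates after finitely many reductions, making the iteration well defined. If anything, you are more careful than the paper, which justifies \eqref{nmgls} for small $\lambda$ by continuity of $f$ alone: continuity only gives $o(1)$ control and cannot yield the order-$\lambda$ decrease needed when $f(x_k)$ attains the maximum, whereas your first-order expansion (with the implicit requirement $1-2\gamma>0$, which is stronger than the paper's stated $\gamma>0$) and your flagged need for a one-sided directional or generalized-Jacobian estimate in the genuinely nonsmooth case identify precisely the points the paper's argument glosses over.
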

\begin{proof}
Suppose that the algorithm does not stop, at any iteration, at step 1, then $d_k$ is a descent direction. For $\gamma>0$ and sufficiently small $\lambda>0$, since $f$ is continuous then the following inequality 
$$f(x_k+\lambda d_k)\leq\max_{0\leq j\leq\min(k,M)}f(x_{k-j})+2\gamma\lambda\la T_k,\mathcal{J}_k\ra d_k$$
holds for all $k\geq 0$. Hence, the algorithm does not run many iterations between steps 4 and 5, and jump quickly to the update rule $x_{k+1}$. 
\end{proof}

\textbf{Assumption 1:} Before proving the convergence we need to assume the following for the rest of the paper:
\begin{enumerate}
\item Let $\alpha_{k+1}$ be computed as in step 6, is positive.
\item The level set 
$$\Gamma_0=\{x\in\HH:0\leq T(x)\leq T(x_0)\}$$ 
is bounded and closed such that it contains the generated sequence $\{x_k\}$.
\item Every accumulation point $\bar{x}$ satisfies $0\in\partial f(\bar{x})$ or in general $0\in T(\bar{x})$.
\item The set-valued map $T$ is nonsmooth. 
\item $T$ is bounded; namely, there exists $G>0$ such that $\|T\|\leq G$.
\item The subdifferential $\partial f$ is Lipschitz continuous on $\Gamma_0$, \textit{i.e.}, there exists $L>0$ such that 
$$\|\partial f(x)-\partial f(y)\|\leq L\|x-y\|,$$
for all $x,y\in\Gamma_0.$ 
\item The generalized Jacobian matrix $\mathcal{J}(x)$ is non-singular for any $x\in\Gamma_0.$ 
\end{enumerate}
\medskip 

\begin{lemma}\label{norms-subdifferentials-bdd}
Let the sequence $\{x_k\}$ be generated by the algorithm, then by assumption 1, and for all $k$, there exist the constants $c_1,c_2,c_3>0$ such that $$\|d_k\|\leq c_1\|\partial f(x_k)\|,\ \|\partial f(x_k)\|\leq c_2\|d_k\|\quad \mbox{and}\quad\la T,\JJ\ra d_k\leq -c_3\|\partial f(x_k)\|^2.$$
\end{lemma}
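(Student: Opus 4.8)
The plan is to exploit two features of the direction produced by the algorithm: that $d_k$ is, up to a sign, the subgradient $T_k$ itself, and that the algorithm only proceeds past Step 1 when the Rayleigh-type ratio $\langle T_k,\mathcal{J}_kT_k\rangle/\|T_k\|^2$ is at least $\varepsilon$.

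First I would record that, by Step 3, $d_k=-\mathrm{sgn}_kT_k$ with $\mathrm{sgn}_k\in\{-1,1\}$, so $\|d_k\|=\|T_k\|$ for every $k$. In the optimization instance of \eqref{variational-inclusion} one has $T=\partial f$, so $T_k$ is a subgradient at $x_k$; since $f$ is locally Lipschitz and differentiable on an open dense set containing the iterates (the regularity underlying Assumptions 1.4--1.6), $\partial f(x_k)$ reduces to the singleton $\{\nabla f(x_k)\}=\{T_k\}$ and hence $\|d_k\|=\|T_k\|=\|\partial f(x_k)\|$, which gives the first two inequalities with $c_1=c_2=1$. If one prefers to keep $\partial f(x_k)$ genuinely set-valued and read $\|\partial f(x_k)\|$ as the norm of its minimal-norm element, the two bounds still hold with constants depending only on $G$ and $L$: the upper bound $\|T_k\|\le G$ is Assumption 1.5, and a uniform comparison of the selected subgradient $T_k$ with the whole set $\partial f(x_k)$ follows from the Lipschitz continuity of $\partial f$ on the compact set $\Gamma_0$ in Assumption 1.6.

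Next, for the curvature estimate, I would use that Step 3 is reached only after the Step 1 stopping test has failed, i.e. $\|T_k\|\ne 0$ and $\langle T_k,\mathcal{J}_kT_k\rangle\ge\varepsilon\|T_k\|^2>0$; consequently $\mathrm{sgn}_k=1$, $d_k=-T_k$, and
$$\langle T,\mathcal{J}\rangle d_k=\langle d_k,\mathcal{J}_kT_k\rangle=-\langle T_k,\mathcal{J}_kT_k\rangle\le-\varepsilon\|T_k\|^2.$$
Combining this with $\|d_k\|=\|T_k\|$ and the norm equivalence of $\|T_k\|$ with $\|\partial f(x_k)\|$ established above yields $\langle T,\mathcal{J}\rangle d_k\le-c_3\|\partial f(x_k)\|^2$ with, for instance, $c_3=\varepsilon/c_2^2$. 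The non-singularity of $\mathcal{J}(x)$ on $\Gamma_0$ (Assumption 1.7), together with the continuity of $\mathcal{J}$ and the compactness of $\Gamma_0$ from Assumption 1.2, keeps the singular values of $\mathcal{J}_k$ in a fixed positive interval, which is what forces $c_1,c_2,c_3$ to be independent of $k$.

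The step I expect to be the main obstacle is the clean two-sided comparison between $\|d_k\|$ and $\|\partial f(x_k)\|$: because $d_k$ is built from one particular subgradient $T_k$ whereas $\|\partial f(x_k)\|$ refers to the entire set, one genuinely needs the regularity hypotheses (differentiability on a dense set, or Lipschitz continuity of $\partial f$ together with its boundedness) to tie the chosen selection to the set. Everything else --- the sign bookkeeping in Step 3, the invocation of the Step 1 ratio, and the uniformity of the three constants via compactness of $\Gamma_0$ --- is routine.
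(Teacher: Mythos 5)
The paper's own ``proof'' is a one-line appeal to Lemma 3.3 of \cite{LaCruz}, where $f$ is the merit function $f(x)=\|F(x)\|^{2}$, $\nabla f=2\mathcal{J}^{T}F$, and the three inequalities are a nontrivial two-sided comparison of $\|F_k\|$ with $\|\mathcal{J}_k^{T}F_k\|$ obtained from the Step-1 ratio and boundedness of $\mathcal{J}$ on the compact level set. You have exactly the right raw ingredients ($\|d_k\|=\|T_k\|$, the Step-1 test giving $|\langle T_k,\mathcal{J}_kT_k\rangle|\geq\varepsilon\|T_k\|^{2}$, compactness of $\Gamma_0$ for uniformity), but you deploy them under a reading that diverges from the one the lemma is copied from: you set $T=\partial f$, so that $\|\partial f(x_k)\|=\|T_k\|=\|d_k\|$ and the first two inequalities collapse to the triviality $c_1=c_2=1$. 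In the intended framework --- as the paper's Theorem \ref{main-thm} itself confirms by writing $\partial f(\bar x)=2\langle T(\bar x),\mathcal{J}(\bar x)\rangle$ --- $\|\partial f(x_k)\|$ stands for $2\|\mathcal{J}_k^{T}T_k\|$, and the first two inequalities are precisely the comparison your write-up never performs: $\|\partial f(x_k)\|\leq 2\|\mathcal{J}_k\|\,\|T_k\|=c_2\|d_k\|$ using boundedness of $\mathcal{J}$ on the compact $\Gamma_0$, and $\varepsilon\|T_k\|^{2}\leq|\langle T_k,\mathcal{J}_kT_k\rangle|\leq\|T_k\|\,\|\mathcal{J}_k^{T}T_k\|$, whence $\|d_k\|=\|T_k\|\leq(2\varepsilon)^{-1}\|\partial f(x_k)\|$. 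Your closing remark that the singular values of $\mathcal{J}_k$ stay in a fixed positive interval is exactly the fact these bounds need, but in your argument it does no work, since your constants are $c_1=c_2=1$ independently of $\mathcal{J}$.

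Two secondary issues. Your singleton reduction is unsupported: nothing in Assumption 1 places the iterates in the dense set where $f$ is differentiable, and reading Assumption 1.6 as single-valuedness of $\partial f$ is an interpretation rather than a proof; your fallback via ``Lipschitz continuity of $\partial f$ ties the selection $T_k$ to the set $\partial f(x_k)$'' is left entirely vague. Also, your claim that $\mathrm{sgn}_k=1$ always holds rests on reading Step 1 without an absolute value; in the source scheme of \cite{LaCruz} the test is $|\langle T_k,\mathcal{J}_kT_k\rangle|/\|T_k\|^{2}\leq\varepsilon$, which is the only reason $\mathrm{sgn}_k$ exists. This is harmless --- keeping the sign general still yields $\langle d_k,\mathcal{J}_kT_k\rangle=-|\langle T_k,\mathcal{J}_kT_k\rangle|\leq-\varepsilon\|T_k\|^{2}$ --- but the descent estimate should be stated that way and then combined with the genuine bound $\|\partial f(x_k)\|\leq c_2\|T_k\|$ to produce $c_3$; with your identification $c_3$ again degenerates and the lemma loses the content it is meant to carry into Theorem \ref{main-thm}.
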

\begin{proof}
The proof is very similar to the proof of Lemma 3.3 in \cite{LaCruz} with slightly changes related to the subdifferential $\partial f$ rather than the smooth term $\nabla f.$
\end{proof}
The main theorem in this paper concludes that the spectral subgradient algorithm for solving variational inclusions stops after finitely many iterations and confirms that the generated sequence converges independently of any choice for the initial point. 
\begin{theorem}\label{main-thm}
Consider assumption 1 is valid, then the spectral subgradient algorithm \ref{SSG} either stops after finitely many iterations, or the generated sequence $\{x_k\}$ converges such that 
$$\lim_{k\to\infty}\|T_k\|=0.$$
Moreover, the sequence $\{x_k\}$ contained in $\Gamma_0$ and every accumulation point satisfies the inclusion $0\in T(\bar{x})$. 
\end{theorem}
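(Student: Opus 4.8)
The plan is to follow the classical nonmonotone-line-search convergence template (as in La Cruz--Raydan \cite{LaCruz}, adapted to the subgradient setting via Lemma~\ref{norms-subdifferentials-bdd}), splitting into two cases according to whether the algorithm terminates in finitely many iterations.

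First I would dispose of the finite-termination case: if the algorithm stops at Step~1 or Step~6 for some $k$, then either $\|T_k\|=0$ (so $0\in T(x_k)$ directly), or the spectral Rayleigh-quotient criterion forces $x_{k+1}=x_k$, and in either case the conclusion is immediate. So assume from now on the algorithm produces an infinite sequence $\{x_k\}\subseteq\Gamma_0$ (using Assumption~1(2)). The next step is the standard device for handling the ``$\max$'' in \eqref{nmgls}: define $\ell(k)\in\{k-\min(k,M),\dots,k\}$ to be an index achieving $f(x_{\ell(k)})=\max_{0\le j\le\min(k,M)}f(x_{k-j})$, and show by induction that the sequence $k\mapsto f(x_{\ell(k)})$ is nonincreasing and hence convergent (here one uses that $d_k$ is a descent direction, Lemma~\ref{norms-subdifferentials-bdd}, and that $f$ is bounded below on the bounded closed set $\Gamma_0$). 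From \eqref{nmgls} evaluated along the subsequence $\ell(k)$ one then extracts
\[
\lim_{k\to\infty}\lambda_{\ell(k)-1}\,\big|\la d_{\ell(k)-1},\mathcal{J}_{\ell(k)-1}T_{\ell(k)-1}\ra\big|=0,
\]
and a telescoping/ inductive argument on the ``window'' of size $M$ (Grippo-type lemma) upgrades this to $\lim_{k\to\infty}\lambda_k\,|\la d_k,\mathcal{J}_kT_k\ra|=0$ for the full sequence.

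The heart of the argument is then to convert this into $\|T_k\|\to 0$. Using Lemma~\ref{norms-subdifferentials-bdd}, $|\la d_k,\mathcal{J}_kT_k\ra|\ge c_3\|\partial f(x_k)\|^2$ and $\|d_k\|$ is comparable to $\|\partial f(x_k)\|$, so it suffices to rule out the possibility that $\|\partial f(x_k)\|$ stays bounded away from $0$ along a subsequence while $\lambda_k\to 0$ there. This is the classical ``the step size cannot shrink to zero unless the gradient does'' argument: if $\lambda_k\to 0$ on a subsequence, then by Step~5 the trial step $\lambda_k/\theta$ failed the test \eqref{nmgls}, i.e.
\[
f\!\big(x_k+\tfrac{\lambda_k}{\theta}d_k\big)>\max_{0\le j\le\min(k,M)}f(x_{k-j})+\tfrac{2\gamma\lambda_k}{\theta}\la d_k,\mathcal{J}_kT_k\ra\ \ge\ f(x_k)+\tfrac{2\gamma\lambda_k}{\theta}\la d_k,\mathcal{J}_kT_k\ra;
\]
dividing by $\lambda_k/\theta$, using the Lipschitz continuity of $\partial f$ on $\Gamma_0$ (Assumption~1(6)) to control $f(x_k+t d_k)-f(x_k)$ by a mean-value/descent-lemma estimate, and letting $k\to\infty$ along the subsequence yields a contradiction with $2\gamma<1$ (or $\gamma\in(0,\tfrac12)$), forcing $\|\partial f(x_k)\|\to 0$ on that subsequence, hence $\|T_k\|\to 0$. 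Combining the ``$\lambda_k$ bounded below'' branch (where $\lambda_k|\la d_k,\mathcal{J}_kT_k\ra|\to 0$ directly gives $\|\partial f(x_k)\|\to 0$) with this branch yields $\lim_{k\to\infty}\|T_k\|=0$ for the whole sequence.

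Finally, for the accumulation-point statement: $\{x_k\}\subseteq\Gamma_0$ which is bounded and closed (Assumption~1(2)), so accumulation points exist and lie in $\Gamma_0$; taking a convergent subsequence $x_{k_j}\to\bar x$, lower semicontinuity / outer semicontinuity of the (generalized) subdifferential together with $\|T_{k_j}\|\to 0$ gives $0\in T(\bar x)$ (equivalently $0\in\partial f(\bar x)$ in the optimization case), which is exactly Assumption~1(3) realized. The main obstacle I anticipate is the second branch — making the ``$\lambda_k\to 0$ forces $\|\partial f(x_k)\|\to 0$'' step rigorous in the nonsmooth set-valued setting, since the usual proof uses a first-order Taylor expansion of $f$ that must here be replaced by a descent-lemma-type inequality valid under only Lipschitz continuity of $\partial f$ and nonsmoothness of $T$; care is also needed that the ``average Jacobian'' $\mathcal{J}_k$ appearing in \eqref{nmgls} is consistently related to $\partial f$ via \eqref{Rayleigh} so that Lemma~\ref{norms-subdifferentials-bdd}'s constants genuinely apply.
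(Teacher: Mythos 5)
Your proposal is correct in outline and follows essentially the same route as the paper's own proof, which likewise argues by the La Cruz--Raydan/Grippo nonmonotone template: finite-termination case aside, compactness of $\Gamma_0$, the bounds of Lemma~\ref{norms-subdifferentials-bdd}, boundedness of the spectral steps, and outer semicontinuity at accumulation points to get $0\in T(\bar{x})$. The only substantive divergence is that you handle the backtracking branch explicitly (the ``$\lambda_k\to 0$ forces $\|\partial f(x_k)\|\to 0$'' argument from the failed trial step in \eqref{nmgls}, which requires an upper bound on $\gamma$ that the paper never imposes), whereas the paper simply asserts $0<\min\{\varepsilon,1/\delta\}\leq\lambda_k\leq\max\{1/\varepsilon,1/\delta\}$ despite Step~5's repeated reduction of $\lambda$ --- on that point your treatment is the more careful one.
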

\begin{proof}
Assuming \textbf{Assumption 1} is true. Suppose on the contrary, that the spectral subgradient algorithm does not stop after finitely many iterations. Let the generated sequence $\{x_k\}$ by Algorithm \ref{SSG} has an accumulation point $\bar{x}$. Following similar arguments of the convergence analysis of theorem 3.4 in \cite{LaCruz}, we obtain that $\Gamma_0$ is closed set and therefore it is compact. Note that $\min(0,M)=0$, and 
$$0\leq\min(k,M)\leq\min(\min(k-1,M)+1,M),\quad\forall k\geq 1.$$
By using Lemma \ref{norms-subdifferentials-bdd}, there exist two positive numbers $c_1,c_3$ such that $$\|d_k\|\leq c_1\|\partial f(x_k)\|$$ 
and 
$$\la T_k,\JJ_kd_k\ra\leq-c_3\|\partial f(x_k)\|^2.$$ 
Furthermore, the spectral steps are bounded by two positive numbers, i.e., 
$$0<\min\{\varepsilon,\frac{1}{\delta}\}\leq \lambda_k\leq\max\{\frac{1}{\varepsilon},\frac{1}{\delta}\}.$$ 
Repeating similar arguments of the proof of the main theorem in \cite{Grippo}, we obtain that 
$$0\in 2\la T(\bar{x}),\JJ(\bar{x})\ra=\partial f(\bar{x}).$$
Since $\mathcal{J}(x)$ is nonsingular in $\Gamma_0$, it is clear to conclude that $0\in T(\bar{x}).$

Finally, the proof of the last statement follows from the first part of the main theorem in \cite{Grippo} with slightly changes related to the set-valued subdifferential operators which can be investigated in the same manner whether set-valued operators are smooth or not.
\end{proof}

The strong global convergence of the spectral subgradient algorithm, without stopping the search procedure in step 1, is guaranteed when the convex combination of $\JJ$ and its transpose is positive definite.

\begin{corollary}
Assume all conditions in Theorem \ref{main-thm} hold and $\la T_k,\mathcal{J}_k T_k\ra\geq\varepsilon\|T_k\|^2$. 
If the convex combination of the generalized Jacobian and its transpose 
$\JJ_{cc}(x)=(1-r)\JJ(x)+r\JJ^T(x)$ 
is positive definite, then the spectral subgradient algorithm, either stops after finitely many iterations, or the generated sequence $\{x_k\}$ converges such that 
$$\lim_{k\to\infty}\|T_k\|=0.$$
\end{corollary}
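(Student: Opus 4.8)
The plan is to deduce the corollary from Theorem~\ref{main-thm} by checking that, under the two extra hypotheses, the stopping test of Step~1 can never be activated, so that the ``finite termination'' alternative can only come from a genuine fixed point $x_{k+1}=x_k$ in Step~6, while the limit $\lim_{k\to\infty}\|T_k\|=0$ is inherited almost word for word from the theorem. The first observation I would record is that the weight $r$ in $\JJ_{cc}(x)=(1-r)\JJ(x)+r\JJ^{T}(x)$ is immaterial for the quadratic form that actually appears in the algorithm: since $\la T,\JJ^{T}(x)T\ra=\la\JJ(x)T,T\ra=\la T,\JJ(x)T\ra$, we get $\la T,\JJ_{cc}(x)T\ra=\la T,\JJ(x)T\ra$. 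Thus positive definiteness of $\JJ_{cc}$ is precisely positive definiteness of the symmetric part of $\JJ$, and because $\Gamma_0$ is compact by Assumption~1, I would extract a uniform constant $\mu>0$ with $\la T_k,\JJ_k T_k\ra\ge\mu\|T_k\|^2$ for all $k$ by a continuity/compactness argument on $\Gamma_0$.

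Next I would combine this with the standing hypothesis $\la T_k,\JJ_k T_k\ra\ge\varepsilon\|T_k\|^2$: the Rayleigh quotient $\la T_k,\JJ_k T_k\ra/\|T_k\|^2$ stays bounded below by $\max\{\varepsilon,\mu\}>0$, so the second branch of the Step~1 test never fires, and if the first branch $\|T_k\|=0$ held we would already be at a solution. Since $\la T_k,\JJ_k T_k\ra>0$ whenever $T_k\neq0$, we also have $\mathrm{sgn}_k=+1$, so the search direction is $d_k=-T_k$ at every iteration. From here the argument is the one already used for Theorem~\ref{main-thm}: Lemma~\ref{norms-subdifferentials-bdd} provides $c_1,c_2,c_3>0$ with $\|d_k\|\le c_1\|\partial f(x_k)\|$, $\|\partial f(x_k)\|\le c_2\|d_k\|$ and $\la T_k,\JJ_k d_k\ra\le-c_3\|\partial f(x_k)\|^2$; the spectral steps satisfy the two-sided bound $0<\min\{\varepsilon,1/\delta\}\le\lambda_k\le\max\{1/\varepsilon,1/\delta\}$; and the Grippo-type nonmonotone line search estimate of~\cite{Grippo} then forces $\la T_k,\JJ_k T_k\ra\to0$. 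Since this form dominates $\mu\|T_k\|^2$, this yields $\lim_{k\to\infty}\|T_k\|=0$, and as $\{x_k\}\subset\Gamma_0$ and $\JJ$ is nonsingular on $\Gamma_0$, every accumulation point $\bar x$ satisfies $0\in T(\bar x)$.

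The hard part will be the very first step, namely turning ``$\JJ_{cc}$ is positive definite'' into the uniform lower bound $\la T_k,\JJ_k T_k\ra\ge\mu\|T_k\|^2$. Because $\JJ_k$ is a generalized Jacobian and therefore set-valued in general, positive definiteness has to be read as holding for every selection, and the uniform $\mu$ must be produced from compactness of $\Gamma_0$ together with the boundedness of $T$ from Assumption~1 and closedness of the graph of $x\mapsto\JJ(x)$ — a standard but not completely automatic argument. Once that bound is secured, everything else is a direct transcription of the proof of Theorem~\ref{main-thm} with the Step~1 stopping rule switched off, which is exactly the strong global convergence asserted in the corollary.
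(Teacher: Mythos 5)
Your proposal is correct and follows essentially the same route as the paper: you use the identity $\la T,\JJ_{cc}(x)T\ra=\la T,\JJ(x)T\ra$ together with compactness of $\Gamma_0$ to obtain a uniform lower bound $\mu>0$ (the paper's $u_{\min}$) with $\la T_k,\JJ_k T_k\ra\ge\mu\|T_k\|^2$, and then invoke Theorem~\ref{main-thm}. Your added remarks --- that the Step~1 test therefore never fires, that $\mathrm{sgn}_k=+1$ so $d_k=-T_k$, and that the uniform eigenvalue bound needs care when $\JJ$ is genuinely set-valued --- are reasonable elaborations of what the paper leaves implicit, not a different argument.
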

\begin{proof}
Without loss of generality, suppose that $\JJ_{cc}$ is positive definite matrix for all $x\in\Gamma_0$. Suppose that the generalized Jacobian matrix $\JJ_{cc}$ has a smallest eigenvalue $\lambda_{\min}(\JJ_{cc}(x))$ such that $$0<u_{\min}\leq\lambda_{\min}(\JJ_{cc}(x_k)),$$ 
due to the continuity and compactness of $\Gamma_0$. Hence, for all $k\geq 0$, we have 
$$0<u_{\min}\|T_k\|^2\leq\lambda_{\min}(\JJ_{cc}(x_k))\|T_k\|^2\leq\la T_k,\JJ_{cc}(x_k)T_k\ra=\la T_k,\JJ_kT_k\ra.$$
Applying Theorem \ref{main-thm} on the set-valued mapping $T$, it follows that $\displaystyle\lim_{k\to\infty}\|T_k\|=0.$
\end{proof}

\section{Conclusion}
Variational inclusions provide a broad and flexible framework for modeling a variety of complex nonconvex nonsmooth optimization and decision-making processes. They are natural extensions to classical variational inequalities and nonlinear equations by allowing set-valued operators as essential components.
In this paper we proposed a spectral subgradient method, combining a spectral step size to accelerate global convergence which perform better than classical steps in gradient descent methods, and a nonmonotone globalization line search strategy to escape local minima and relax the descent condition in every iteration but allow short increases in the search for the objective function value. 
Large-scale nonconvex optimization problems, such as training neural networks or image recovering, can benefit from the flexibility of nonmonotone spectral line searches.
\medskip 

Future research directions may focus on developing efficient algorithms with normalizing the subgradient term in either discrete-time or continuous-time gradient descent methods for the purpose of relaxing the magnitude of the gradient. Normalized gradient descent is a variation of the standard gradient descent where the gradient vector is normalized before updating the parameters; meaning the step size remains the same irrespective of the gradient magnitude. One useful advantage is to integrate nonconvex optimization landscapes in machine learning models. More advantages can be listed; improving convergence, robustness, reduces sensitivity of the learning rate. focuses on the direction of the gradient rather than its magnitude. 
In addition, expanding the spectral notion to derivative-free schemes for solving variational inclusions and applying nonmonotone line searches for complex optimization methods over manifolds.



\subsection*{Declarations} 
Disclosure statement
The author declares that there was no conflict of interest or competing interest. This work was done without any resource of funding. Data Availability is not applicable.



\end{document}